 \newtheorem{thm}{Theorem}[section]
 \newtheorem{lem}[thm]{Lemma}
 \newtheorem{prop}[thm]{Proposition}
 \theoremstyle{definition}
 \theoremstyle{remark}
 \newtheorem{rem}[thm]{Remark}
 \numberwithin{equation}{section}
\newcommand{\mymod}[3]{#1 \equiv #2 \kern -0.5em \pmod{#3}}
\newcommand{\mynotmod}[3]{#1 \not \equiv #2 \kern -0.6em \pmod{#3}}
\begin{document}

%
%
%
%
%
%
%
%
%

\title[On the third-order Horadam and geometric mean sequences]{On the third-order Horadam and geometric mean sequences}

\author[G. Cerda-Morales]{Gamaliel Cerda-Morales}
\address{Instituto de Matem\'aticas, Pontificia Universidad Cat\'olica de Valpara\'iso, Blanco Viel 596, Valpara\'iso, Chile.}
\email{gamaliel.cerda.m@mail.pucv.cl}

\subjclass{11B37, 11B39, 11K31.}

\keywords{Generalized Fibonacci number, generalized Tribonacci number, geometric mean sequence, third-order Horadam number.}


\begin{abstract}
This paper, in considering aspects of the geometric mean sequence, offers new results connecting generalized Tribonacci and third-order Horadam numbers which are established and then proved independently.
\end{abstract}

\maketitle
\section {Introduction}


The Horadam numbers have many interesting properties and applications in many fields of science (see, e.g., \cite{Lar1,Lar2}). The Horadam numbers $H_{n}(a,b;r,s)$ or $H_{n}$ are defined by the recurrence relation
\begin{equation}\label{e1}
H_{0}=a,\ H_{1}=b,\ H_{n+2}=rH_{n+1}+sH_{n},\ n\geq0.
\end{equation}
Another important sequence is the generalized Fibonacci sequence $\{h_{n}^{(3)}\}_{n\in \mathbb{N}}$.  This sequence is defined by the recurrence relation $h_{n+2}=rh_{n+1}+sh_{n}$, with $h_{0}=0$, $h_{1}=1$ and $n\geq0$.

In \cite{Sha,Wa} the Horadam recurrence relation (\ref{e1}) is extended to higher order recurrence relations and the basic list of identities provided by A. F. Horadam is expanded and extended to several identities for some of the higher order cases. In fact, third-order Horadam numbers, $\{H_{n}^{(3)}(a,b,c;r,s,t)\}$, and generalized Tribonacci numbers, $\{h_{n}^{(3)}(0,1,r;r,s,t)\}_{n\geq0}$, are defined by
\begin{equation}\label{ec:5}
H_{n+3}^{(3)}=rH_{n+2}^{(3)}+sH_{n+1}^{(3)}+tH_{n}^{(3)},\ H_{0}^{(3)}=a,\ H_{1}^{(3)}=b,\ H_{2}^{(3)}=c,\ n\geq0,
\end{equation}
and 
\begin{equation}\label{ec:6}
h_{n+3}^{(3)}=rh_{n+2}^{(3)}+sh_{n+1}^{(3)}+th_{n}^{(3)},\ h_{0}^{(3)}=0,\ h_{1}^{(3)}=1,\ h_{2}^{(3)}=r,\ n\geq0,
\end{equation}
respectively.

Some of the following properties given for third-order Horadam numbers and generalized Tribonacci numbers are revisited in this paper (for more details, see \cite{Cer1,Sha,Wa}). 
\begin{equation}\label{e4}
H_{n+m}^{(3)}=h_{n}^{(3)}H_{m+1}^{(3)}+\left(sh_{n-1}^{(3)}+th_{n-2}^{(3)}\right)H_{m}^{(3)}+th_{n-1}^{(3)}H_{m-1}^{(3)},
\end{equation}
\begin{equation}\label{e5}
\left(h_{n}^{(3)}\right)^{2}+s\left(h_{n-1}^{(3)}\right)^{2}+2th_{n-1}^{(3)}h_{n-2}^{(3)}=h_{2n-1}^{(3)}
\end{equation}
and
\begin{equation}\label{ec5}
\left(H_{n}^{(3)}\right)^{2}+s\left(H_{n-1}^{(3)}\right)^{2}+2tH_{n-1}^{(3)}H_{n-2}^{(3)}=\left\lbrace 
\begin{array}{c}
cH_{2n-2}^{(3)}+\left(sb+ta\right)H_{2n-3}^{(3)}\\
+tbH_{2n-4}^{(3)},
\end{array}
\right\rbrace,
\end{equation}
where $n\geq 2$ and $m\geq 1$.

As the elements of this Tribonacci-type number sequence provide third order iterative relation, its characteristic equation is $x^{3}-rx^{2}-sx-t=0$, whose roots are $\alpha=\frac{r}{3}+A+B$, $\omega_{1}=\frac{r}{3}+\epsilon A+\epsilon^{2} B$ and $\omega_{2}=\frac{r}{3}+\epsilon^{2}A+\epsilon B$, where $$A=\sqrt[3]{\frac{r^{3}}{27}+\frac{rs}{6}+\frac{t}{2}+\sqrt{\Delta}},\ B=\sqrt[3]{\frac{r^{3}}{27}+\frac{rs}{6}+\frac{t}{2}-\sqrt{\Delta}},$$ with $\Delta=\Delta(r,s,t)=\frac{r^{3}t}{27}-\frac{r^{2}s^{2}}{108}+\frac{rst}{6}-\frac{s^{3}}{27}+\frac{t^{2}}{4}$ and $\epsilon=-\frac{1}{2}+\frac{i\sqrt{3}}{2}$. 

In this paper, $\Delta>0$, then the cubic equation $x^{3}-rx^{2}-sx-t=0$ has one real and two nonreal solutions, the latter being conjugate complex. Thus, the Binet formula for the third-order Horadam numbers can be expressed as:
\begin{equation}\label{eq:8}
H_{n}^{(3)}=\frac{P\alpha^{n}}{(\alpha-\omega_{1})(\alpha-\omega_{2})}-\frac{Q\omega_{1}^{n}}{(\alpha-\omega_{1})(\omega_{1}-\omega_{2})}+\frac{R\omega_{2}^{n}}{(\alpha-\omega_{2})(\omega_{1}-\omega_{2})},
\end{equation}
where the coefficients are $P=c-(\omega_{1}+\omega_{2})b+\omega_{1}\omega_{2}a$, $Q=c-(\alpha+\omega_{2})b+\alpha\omega_{2}a$ and $R=c-(\alpha+\omega_{1})b+\alpha\omega_{1}a$.

In particular, if $a=0$, $b=1$ and $c=r$, we obtain $H_{n}^{(3)}=h_{n}^{(3)}$. In this case, $P=\alpha$, $Q=\omega_{1}$ and $R=\omega_{2}$ in Eq. (\ref{eq:8}). In fact, the third-order Horadam sequence is the generalization of the well-known sequences like Tribonacci, Padovan, Narayana and third-order Jacobsthal (see \cite{Cer}). 

Consider the (scaled) geometric mean sequence $\{g_{n}^{(3)}(a,b,c;\epsilon)\}_{n=0}^{\infty}$ defined, given $g_{0}^{(3)}=a$, $g_{1}^{(3)}=b$ and $g_{2}^{(3)}=c$, through the recurrence
\begin{equation}\label{e0}
g_{n+3}^{(3)}=\epsilon \sqrt[3]{g_{n+2}^{(3)}g_{n+1}^{(3)}g_{n}^{(3)}},\ n\geq0,
\end{equation}
where $\epsilon \in \mathbb{N}$ is a scaling constant. In this paper we assume $\epsilon=1$ and begin by finding the growth rate of the sequence
\begin{equation}\label{e00}
\left\lbrace g_{n}^{(3)}\right\rbrace=\left\lbrace a, b, c, \left(abc\right)^{\frac{1}{3}}, \left(ab^{4}c^{4}\right)^{\frac{1}{9}}, \left(a^{4}b^{7}c^{16}\right)^{\frac{1}{27}}, \left(a^{16}b^{28}c^{37}\right)^{\frac{1}{81}}, \cdots \right\rbrace 
\end{equation}
using two alternative approaches, one of which are routine with the other based on a connection between this sequence and the generalized Tribonacci sequence $\{T_{n}^{(3)}\}=\{H_{n}^{(3)}(0,0,1;1,3,9)\}$ defined by
\begin{equation}\label{ecua:6}
T_{n+3}^{(3)}=T_{n+2}^{(3)}+3T_{n+1}^{(3)}+9T_{n}^{(3)},\ T_{0}^{(3)}=T_{1}^{(3)}=0,\ T_{2}^{(3)}=1,
\end{equation}
discernible in the powers of $a$, $b$ and $c$ in  Eq. (\ref{e00}). Other results follow accordingly, with generalized Tribonacci numbers expressed in terms of families of parameterized third-order Horadam numbers in two particular identities that are established in different ways.


\section {Growth Rate of $\{g_{n}^{(3)}(a,b,c;1)\}_{n=0}^{\infty}$}

We begin by showing that the growth rate of the sequence $\{g_{n}^{(3)}(a,b,c;1)\}_{n=0}^{\infty}$ is 1. That is to say the following.

\begin{thm}\label{t1}
The sequence  $\{g_{n}^{(3)}(a,b,c;1)\}_{n=0}^{\infty}$ grows according to
\begin{equation}\label{p1}
\lim_{n\rightarrow \infty}\frac{g_{n+1}^{(3)}(a,b,c;1)}{g_{n}^{(3)}(a,b,c;1)}=1.
\end{equation}
\end{thm}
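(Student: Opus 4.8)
The plan is to track the exponents of $a$, $b$, and $c$ separately. Writing $g_n^{(3)} = a^{x_n} b^{y_n} c^{z_n}$ (interpreting this via logarithms, so $\log g_n^{(3)} = x_n \log a + y_n \log b + z_n \log c$, assuming $a,b,c>0$), the geometric-mean recurrence \eqref{e0} with $\epsilon=1$ becomes the \emph{linear} recurrence
\begin{equation}\label{plan-rec}
x_{n+3} = \tfrac{1}{3}\left(x_{n+2} + x_{n+1} + x_n\right),
\end{equation}
and likewise for $y_n$ and $z_n$, with initial data $(x_0,x_1,x_2)=(1,0,0)$, $(y_0,y_1,y_2)=(0,1,0)$, $(z_0,z_1,z_2)=(0,0,1)$. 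So the whole problem reduces to the asymptotics of the scalar recurrence \eqref{plan-rec}. Thus
\begin{equation}\label{plan-ratio}
\frac{g_{n+1}^{(3)}}{g_n^{(3)}} = a^{\,x_{n+1}-x_n}\, b^{\,y_{n+1}-y_n}\, c^{\,z_{n+1}-z_n},
\end{equation}
so it suffices to show each of $x_{n+1}-x_n$, $y_{n+1}-y_n$, $z_{n+1}-z_n$ tends to $0$; then the right side of \eqref{plan-ratio} tends to $a^0 b^0 c^0 = 1$.

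For the routine approach I would analyze \eqref{plan-rec} directly. Its characteristic polynomial is $3\lambda^3 - \lambda^2 - \lambda - 1 = 0$, which has $\lambda = 1$ as a root (since $3-1-1-1=0$), factoring as $(\lambda-1)(3\lambda^2+2\lambda+1)=0$; the other two roots satisfy $|\lambda|^2 = 1/3 < 1$, hence lie strictly inside the unit disk. Therefore $x_n = C + (\text{terms decaying geometrically})$ for a constant $C$ depending on the initial data, and in particular $x_{n+1}-x_n \to 0$. The same holds for $y_n$ and $z_n$. One can even identify the limits: $x_n \to C_x$, $y_n \to C_y$, $z_n \to C_z$ with $C_x+C_y+C_z$ determined by a weighted average of the initial conditions — but that level of detail is not needed for \eqref{p1}. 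The key observation making everything work is that the sum of coefficients in the averaging recurrence is exactly $1$, forcing $\lambda=1$ to be a root and the remaining roots to be small.

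The second, less routine approach exploits the connection advertised in the introduction: the exponent sequences $(x_n, y_n, z_n)$, once cleared of denominators, are governed by the same third-order pattern as the generalized Tribonacci sequence $\{T_n^{(3)}\} = \{H_n^{(3)}(0,0,1;1,3,9)\}$ from \eqref{ecua:6}. Concretely, if we write $x_n = X_n / 3^{n-?}$ with $X_n$ an integer sequence, the substitution $\lambda \mapsto \mu/3$ turns $3\lambda^3-\lambda^2-\lambda-1$ into (a scalar multiple of) $\mu^3 - \mu^2 - 3\mu - 9$, which is precisely the characteristic polynomial of \eqref{ecua:6}. So the numerators in \eqref{e00} are, up to index shifts and linear combinations, values of $T_n^{(3)}$, and the denominators are powers of $3$; the ratio \eqref{plan-ratio} then becomes $a,b,c$ raised to powers of the form $\bigl(T_{n+1}^{(3)}\text{-combination} - 3\cdot T_n^{(3)}\text{-combination}\bigr)/3^{\,n+1}$, and since $T_n^{(3)}$ grows like $\alpha^n$ with $\alpha < 3$ (indeed $\alpha$ is the real root of $\mu^3-\mu^2-3\mu-9$, numerically $\alpha \approx 2.69$), each such exponent tends to $0$.

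The main obstacle is not any single hard estimate but rather setting up the bookkeeping cleanly: verifying that the exponents of $a$, $b$, $c$ in \eqref{e00} genuinely satisfy \eqref{plan-rec}, and in the second approach, pinning down the exact index shift and the exact integer linear combination of $T_n^{(3)}$ (and its neighbors) that produces each exponent numerator. Once that identification is made — and it can be checked against the first few terms $\{a,b,c,(abc)^{1/3},(ab^4c^4)^{1/9},(a^4b^7c^{16})^{1/27},\dots\}$ displayed in \eqref{e00} — the convergence is immediate from $|\omega_1|=|\omega_2|<\sqrt{3}<3$ (equivalently $3\alpha_{\text{small}}^2 < 1$ for the small roots of the averaging polynomial) together with the presence of the eigenvalue $1$. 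I would present the characteristic-root argument as the main proof and remark on the Tribonacci reformulation as the promised alternative.
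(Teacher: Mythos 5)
Your main argument is correct, and it takes a genuinely different route from the paper. The paper's first proof sets $\mu=\lim_{n\to\infty}g_{n+1}^{(3)}/g_{n}^{(3)}$ and derives $\mu^{6}=1$ from the recurrence \eqref{e0} --- but this \emph{assumes} the limit exists, a gap your approach avoids entirely. The paper's second proof first establishes the closed form $g_{n+2}^{(3)}=\bigl(a^{T_{n+1}^{(3)}}b^{T_{n+1}^{(3)}+3T_{n}^{(3)}}c^{T_{n+2}^{(3)}}\bigr)^{3^{-n}}$ by induction (Lemma 2.2), then invokes the Binet formula $T_{n+1}^{(3)}=\tfrac{1}{6}\bigl(3^{n}+V_{n}^{(2)}\bigr)$ with $V_{n}^{(2)}/3^{n}\to 0$ to see that each exponent difference vanishes. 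Your version bypasses the integer sequence $T_{n}^{(3)}$ altogether: taking logarithms turns \eqref{e0} into the linear averaging recurrence $x_{n+3}=\tfrac{1}{3}(x_{n+2}+x_{n+1}+x_{n})$ on the exponents, whose characteristic polynomial factors as $(\lambda-1)(3\lambda^{2}+2\lambda+1)$ with the two non-unit roots of modulus $1/\sqrt{3}<1$, so each exponent converges and successive differences tend to $0$. This is cleaner and fully rigorous; what the paper's Method II buys instead is the explicit Tribonacci closed form, which it reuses in Section 3 to derive the identities \eqref{i1}--\eqref{i3}, so the detour is not wasted there.

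One factual error in your secondary remark: the real root of $\mu^{3}-\mu^{2}-3\mu-9$ is exactly $3$ (check: $27-9-9-9=0$; the polynomial factors as $(\mu-3)(\mu^{2}+2\mu+3)$), not $\approx 2.69$. Hence $T_{n}^{(3)}/3^{n}$ tends to the nonzero constant $\tfrac{1}{18}$ rather than to $0$, and the vanishing of the exponents in the ratio is \emph{not} a consequence of $T_{n}^{(3)}=o(3^{n})$; it comes from the cancellation of the dominant $3^{n}$ terms in combinations such as $T_{n+2}^{(3)}-3T_{n+1}^{(3)}$, leaving only the subdominant contribution from the roots $-1\pm i\sqrt{2}$ of modulus $\sqrt{3}$. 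This is precisely the cancellation the paper encodes by writing $T_{n+1}^{(3)}=\tfrac{1}{6}(3^{n}+V_{n}^{(2)})$. Since you present the characteristic-root argument as the actual proof and this slip occurs only in the sketched alternative, it does not affect the validity of your proof, but the remark as stated would not survive scrutiny without being repaired along these lines.
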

As alluded to in the Introduction, we approach the proof of Theorem \ref{t1} in two ways.

\textbf{Method I}. This is elementary, and parallels that seen in \cite{Shi} for the case $\{g_{n}^{(2)}(a,b;1)\}_{n=0}^{\infty}$.
\begin{proof}
Writing $\mu=\lim_{n\rightarrow \infty}\frac{g_{n+1}^{(3)}}{g_{n}^{(3)}}\in \mathbb{R}^{+}$ and using Eq. (\ref{e0}), then
\begin{align*}
\mu^{3}&=\left(\lim_{n\rightarrow \infty}\frac{g_{n+3}^{(3)}}{g_{n+2}^{(3)}}\right)^{3}\\
&=\lim_{n\rightarrow \infty}\frac{g_{n+2}^{(3)}g_{n+1}^{(3)}g_{n}^{(3)}}{\left(g_{n+2}^{(3)}\right)^{3}}\\
&=\lim_{n\rightarrow \infty}\frac{g_{n+1}^{(3)}g_{n}^{(3)}g_{n+1}^{(3)}}{g_{n+2}^{(3)}g_{n+2}^{(3)}g_{n+1}^{(3)}}=\frac{1}{\mu^{3}}.
\end{align*}
So that $\mu^{6}=1$ of which $\mu=1$ is a real positive solution.
\end{proof}

\textbf{Method II}.  This is rather more interesting, since it relies on a closed form for the sequence term $\{g_{n}^{(3)}(a,b,c;1)\}_{n=0}^{\infty}$ in which generalized Tribonacci numbers make an appearance. A previous result is demonstrated

\begin{lem}\label{lem}
For $n\geq 0$, 
\begin{equation}\label{gam}
g_{n+2}^{(3)}(a,b,c;1)=\left(a^{T_{n+1}^{(3)}}b^{T_{n+1}^{(3)}+3T_{n}^{(3)}}c^{T_{n+2}^{(3)}}\right)^{3^{-n}},
\end{equation}
where $T_{n}^{(3)}$ is as in Eq. (\ref{ecua:6}).
\end{lem}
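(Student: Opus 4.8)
The plan is to prove the closed form \eqref{gam} by induction on $n$, tracking the exponents of $a$, $b$, and $c$ separately. The key observation is that the recurrence \eqref{e0} with $\epsilon = 1$, namely $g_{n+3}^{(3)} = (g_{n+2}^{(3)} g_{n+1}^{(3)} g_{n}^{(3)})^{1/3}$, is multiplicative: if each of $g_{n}^{(3)}$, $g_{n+1}^{(3)}$, $g_{n+2}^{(3)}$ has the form $a^{x}b^{y}c^{z}$ for suitable rational exponents, then so does $g_{n+3}^{(3)}$, with exponent triple equal to one-third the sum of the three preceding exponent triples. So I would write $g_{n+2}^{(3)} = a^{p_n} b^{q_n} c^{r_n}$ and show that each of the exponent sequences $p_n$, $q_n$, $r_n$ satisfies the linear recurrence $u_{n+3} = \tfrac13(u_{n+2} + u_{n+1} + u_n)$ — equivalently, after the substitution $u_n = 3^{-n} v_n$, the integer sequence $v_n$ satisfies $v_{n+3} = v_{n+2} + 3v_{n+1} + 9v_n$, which is exactly the generalized Tribonacci recurrence \eqref{ecua:6} defining $T_n^{(3)}$.

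First I would check the base cases. For $n=0$: \eqref{gam} claims $g_{2}^{(3)} = a^{T_1^{(3)}} b^{T_1^{(3)} + 3T_0^{(3)}} c^{T_2^{(3)}} = a^0 b^0 c^1 = c$, which matches $g_2^{(3)} = c$. For $n=1$: it claims $g_{3}^{(3)} = (a^{T_2^{(3)}} b^{T_2^{(3)} + 3T_1^{(3)}} c^{T_3^{(3)}})^{1/3}$; since $T_2^{(3)} = 1$, $T_1^{(3)} = 0$, and $T_3^{(3)} = T_2^{(3)} + 3T_1^{(3)} + 9T_0^{(3)} = 1$, this reads $g_3^{(3)} = (abc)^{1/3}$, matching \eqref{e00}. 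One more case, $n=2$, where $T_4^{(3)} = T_3^{(3)} + 3T_2^{(3)} + 9T_1^{(3)} = 1 + 3 + 0 = 4$, gives $g_4^{(3)} = (a^{T_3^{(3)}} b^{T_3^{(3)}+3T_2^{(3)}} c^{T_4^{(3)}})^{1/9} = (a^1 b^4 c^4)^{1/9}$, again matching \eqref{e00}; this confirms the $b$-exponent really does need the extra $3T_n^{(3)}$ term and is a useful sanity check before doing the induction.

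For the inductive step I would assume \eqref{gam} holds for three consecutive indices $n$, $n+1$, $n+2$ (a triple induction, appropriate to a third-order recurrence, with the three base cases above) and apply \eqref{e0}:
\begin{equation*}
g_{n+3}^{(3)} = \left(g_{n+2}^{(3)} g_{n+1}^{(3)} g_{n}^{(3)}\right)^{1/3}.
\end{equation*}
Collecting exponents, the exponent of $c$ in $g_{(n+1)+2}^{(3)} = g_{n+3}^{(3)}$ becomes $3^{-1}\left(3^{-n}T_{n+2}^{(3)} + 3^{-(n+1)}T_{n+1}^{(3)}\cdot 3 \cdots\right)$ — more carefully, one factors out $3^{-(n+1)}$ and is left with needing $T_{n+3}^{(3)} = T_{n+2}^{(3)} + 3T_{n+1}^{(3)} + 9T_{n}^{(3)}$, which is precisely \eqref{ecua:6}. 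The exponent of $a$ reduces in the same way to the same Tribonacci recurrence for the shifted sequence $T_{n+1}^{(3)}$, and the exponent of $b$ to the recurrence applied to $T_{n+1}^{(3)} + 3T_n^{(3)}$, using linearity of the recurrence. The one genuinely fiddly point — the main obstacle, such as it is — is bookkeeping the powers of $3$: one must verify that pulling the common factor $3^{-(n+1)}$ out of the three exponent triples (which carry denominators $3^{n}$, $3^{n-1}$, $3^{n-2}$ after the shift) combines correctly with the extra $\tfrac13$ from the cube root to leave exactly integer coefficients $1, 3, 9$ on the three Tribonacci terms. Once that alignment is checked, the induction closes and \eqref{gam} follows for all $n \geq 0$.
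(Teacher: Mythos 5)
Your proof is correct and follows essentially the same route as the paper: induction on $n$ via the recurrence $g_{n+3}^{(3)}=(g_{n+2}^{(3)}g_{n+1}^{(3)}g_{n}^{(3)})^{1/3}$, collecting the exponents of $a,b,c$ and reducing each to the recurrence $T_{n+3}^{(3)}=T_{n+2}^{(3)}+3T_{n+1}^{(3)}+9T_{n}^{(3)}$ after factoring out $3^{-(n+1)}$. If anything you are more careful than the paper, which verifies only the $n=0$ base case even though the inductive step consumes three prior terms; your explicit check of $n=0,1,2$ closes that small gap.
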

\begin{proof}
To prove Eq. (\ref{gam}), let us use the induction on $n$. If $n=0$, the proof is obvious since that $T_{0}^{(3)}=0=T_{1}^{(3)}=0$, $T_{2}^{(3)}=1$ and $$g_{2}^{(3)}(a,b,c;1)=c=\left(a^{T_{1}^{(3)}}b^{T_{1}^{(3)}+3T_{0}^{(3)}}c^{T_{2}^{(3)}}\right)^{3^{-0}}.$$ Let us assume that Eq. (\ref{gam}) holds for all values $m$ less than or equal $n$. Now we have to show that the result is true for $n+1$:
\begin{align*}
\left(g_{(n+1)+2}^{(3)}\right)^{3}&=g_{n+2}^{(3)}g_{n+1}^{(3)}g_{n}^{(3)}\\
&=\left(a^{T_{n+1}^{(3)}}b^{T_{n+1}^{(3)}+3T_{n}^{(3)}}c^{T_{n+2}^{(3)}}\right)^{3^{-n}}\times \left(a^{T_{n}^{(3)}}b^{T_{n}^{(3)}+3T_{n-1}^{(3)}}c^{T_{n+1}^{(3)}}\right)^{3^{-(n-1)}}\\
&\ \ \times \left(a^{T_{n-1}^{(3)}}b^{T_{n-1}^{(3)}+3T_{n-2}^{(3)}}c^{T_{n}^{(3)}}\right)^{3^{-(n-2)}}\\
&=\left\lbrace\begin{array}{c}a^{T_{n+1}^{(3)}+3T_{n}^{(3)}+9T_{n-1}^{(3)}}\\
\times b^{\left(T_{n+1}^{(3)}+3T_{n}^{(3)}+9T_{n-1}^{(3)}\right)+3\left(T_{n}^{(3)}+3T_{n-1}^{(3)}+9T_{n-2}^{(3)}\right)}\\
\times c^{T_{n+2}^{(3)}+3T_{n+1}^{(3)}+9T_{n}^{(3)}}\end{array}\right\rbrace^{3^{-n}}\\
&=\left(a^{T_{n+2}^{(3)}}b^{T_{n+2}^{(3)}+3T_{n+1}^{(3)}}c^{T_{n+3}^{(3)}}\right)^{3^{-n}}.
\end{align*}
Then, using Eq. (\ref{ecua:6}) we obtain the result.
\end{proof}

The proof of Theorem \ref{t1} is now immediate.
\begin{proof}
From Eq. (\ref{gam}), we have 
\begin{align*}
\lim_{n\rightarrow \infty}\frac{g_{n+3}^{(3)}}{g_{n+2}^{(3)}}&=\lim_{n\rightarrow \infty}\frac{\left(a^{T_{n+2}^{(3)}}b^{T_{n+2}^{(3)}+3T_{n+1}^{(3)}}c^{T_{n+3}^{(3)}}\right)^{3^{-(n+1)}}}{\left(a^{T_{n+1}^{(3)}}b^{T_{n+1}^{(3)}+3T_{n}^{(3)}}c^{T_{n+2}^{(3)}}\right)^{3^{-n}}}\\
&=\lim_{n\rightarrow \infty}\left\lbrace \begin{array}{c}a^{T_{n+2}^{(3)}-3T_{n+1}^{(3)}}b^{\left(T_{n+2}^{(3)}-3T_{n+1}^{(3)}\right)+3\left(T_{n+1}^{(3)}-3T_{n}^{(3)}\right)}\\
\times c^{T_{n+3}^{(3)}-3T_{n+2}^{(3)}} \end{array}\right\rbrace^{3^{-(n+1)}}\\
&=\lim_{n\rightarrow \infty}a^{\frac{T_{n+2}^{(3)}-3T_{n+1}^{(3)}}{3^{n+1}}}b^{\frac{T_{n+2}^{(3)}-9T_{n}^{(3)}}{3^{n+1}}}c^{\frac{T_{n+3}^{(3)}-3T_{n+2}^{(3)}}{3^{n+1}}}.
\end{align*}
Now, using Eq. (\ref{eq:8}) and $\{T_{n}^{(3)}\}=\{H_{n}^{(3)}(0,0,1;1,3,9)\}$, we obtain the Binet formula 
\begin{align*}
T_{n+1}^{(3)}&=\frac{1}{6}\left[3^{n}+\left(\frac{-1-i\sqrt{2}}{2}\right)(-1+i\sqrt{2})^{n}+\left(\frac{-1+i\sqrt{2}}{2}\right)(-1-i\sqrt{2})^{n}\right]\\
&=\frac{1}{6}\left[3^{n}+\frac{3}{2}\left((-1+i\sqrt{2})^{n-1}+(-1-i\sqrt{2})^{n-1}\right)\right]\\
&=\frac{1}{6}\left[3^{n}+V_{n}^{(2)}\right],\ n\geq 1,
\end{align*}
where $V_{n}^{(2)}=\frac{3}{2}\left((-1+i\sqrt{2})^{n-1}+(-1-i\sqrt{2})^{n-1}\right)$. In fact, $V_{n}^{(2)}$ satisfies the following properties $V_{n+2}^{(2)}=-2V_{n+1}^{(2)}-3V_{n}^{(2)}$, $V_{0}^{(2)}=-1$, $V_{1}^{(2)}=3$ and $$\lim_{n\rightarrow \infty}\frac{V_{n}^{(2)}}{3^{n}}=\lim_{n\rightarrow \infty}\frac{1}{2}\left(\left(\frac{-1+i\sqrt{2}}{3}\right)^{n-1}+\left(\frac{-1-i\sqrt{2}}{3}\right)^{n-1}\right)=0.$$ Then, we have
\begin{equation}\label{teo1}
\lim_{n\rightarrow \infty}\frac{g_{n+3}^{(3)}}{g_{n+2}^{(3)}}=\lim_{n\rightarrow \infty}\left\lbrace \begin{array}{c}a^{\frac{1}{6}\left(\frac{V_{n+1}^{(2)}-3V_{n}^{(2)}}{3^{n+1}}\right)} \times b^{\frac{1}{6}\left(\frac{V_{n+1}^{(2)}-9V_{n-1}^{(2)}}{3^{n+1}}\right)}\\
\times c^{\frac{1}{2}\left(\frac{V_{n+2}^{(2)}-3V_{n+1}^{(2)}}{3^{n+2}}\right)}\end{array}\right\rbrace=1
\end{equation}
as required in Eq. (\ref{p1}).
\end{proof}

\section{Generalized Tribonacci and Third-order Horadam Identities}

We now develop a couple of identities that link terms of the generalized Tribonacci sequence $T_{n}^{(3)}$ with those of particular third-order Horadam sequences; these are then generalized, with proofs given.

The power product recurrence
\begin{equation}\label{n1}
z_{n+3}^{(3)}=\left(z_{n+2}^{(3)}\right)^{r}\left(z_{n+1}^{(3)}\right)^{s}\left(z_{n}^{(3)}\right)^{t},\ n\geq 0,
\end{equation}
with initial values $z_{0}^{(3)}=a$, $z_{1}^{(3)}=b$ and $z_{2}^{(3)}=c$, is known to produce a sequence $\{z_{n}^{(3)}\}_{n\geq 0}=\{z_{n}^{(3)}(a,b,c;r,s,t)\}$ for which
\begin{equation}\label{n2}
z_{n}^{(3)}(a,b,c;r,s,t)=a^{H_{n}^{(3)}(1,0,0;r,s,t)}b^{H_{n}^{(3)}(0,1,0;r,s,t)}c^{H_{n}^{(3)}(0,0,1;r,s,t)},\end{equation}
with $H_{n}^{(3)}$ as in Eq. (\ref{eq:8}).

It was first put forward by Bunder in 1975 \cite{Bu} for the case $t=0$, having recently been proved
inductively and generalized by Larcombe and Bagdasar in \cite{Lar3}. Since our recursion in Eq. (\ref{e0}) (with $\epsilon=1$) is the case $r=s=t=\frac{1}{3}$ of Eq. (\ref{n1}) we can infer immediately from Eq. (\ref{n2}) and Lemma \ref{lem} that
\begin{equation}\label{n3}
\begin{aligned}
g_{n}^{(3)}(a,b,c;1)&=\left(a^{T_{n-1}^{(3)}}b^{T_{n-1}^{(3)}+3T_{n-2}^{(3)}}c^{T_{n}^{(3)}}\right)^{3^{-(n-2)}}\\
&=z_{n}^{(3)}\left(a,b,c;\frac{1}{3},\frac{1}{3},\frac{1}{3}\right)\\
&=a^{H_{n}^{(3)}\left(1,0,0;\frac{1}{3},\frac{1}{3},\frac{1}{3}\right)}b^{H_{n}^{(3)}\left(0,1,0;\frac{1}{3},\frac{1}{3},\frac{1}{3}\right)}c^{H_{n}^{(3)}\left(0,0,1;\frac{1}{3},\frac{1}{3},\frac{1}{3}\right)},
\end{aligned}
\end{equation}
delivering
\begin{equation}\label{i1}
T_{n-1}^{(3)}=3^{n-2}H_{n}^{(3)}\left(1,0,0;\frac{1}{3},\frac{1}{3},\frac{1}{3}\right),\ n\geq 1,
\end{equation}
\begin{equation}\label{i2}
T_{n-1}^{(3)}+3T_{n-2}^{(3)}=3^{n-2}H_{n}^{(3)}\left(0,1,0;\frac{1}{3},\frac{1}{3},\frac{1}{3}\right),\ n\geq 2
\end{equation}
and
\begin{equation}\label{i3}
T_{n}^{(3)}=3^{n-2}H_{n}^{(3)}\left(0,0,1;\frac{1}{3},\frac{1}{3},\frac{1}{3}\right),\ n\geq 0,
\end{equation}
which we believe are new relations in that they express generalized Tribonacci numbers $T_{n}^{(3)}$ in terms of third-order Horadam numbers $H_{n}^{(3)}\left(a,b,c;r,s,t\right)$ of Eq. (\ref{eq:8}).

\begin{rem}
We remark, for completeness, that values $r=s=t=1$ in Eq. (\ref{n1}) yield, by Eq. (\ref{n2}), 
\begin{align*}
z_{n}^{(3)}(a,b,c;1,1,1)&=a^{H_{n}^{(3)}(1,0,0;1,1,1)}b^{H_{n}^{(3)}(0,1,0;1,1,1)}c^{H_{n}^{(3)}(0,0,1;1,1,1)}\\
&=a^{T_{n-2}}b^{T_{n-2}+T_{n-3}}c^{T_{n-1}},\ n\geq 3,
\end{align*}
where $\{T_{n}\}_{n\geq 0}=\{0,1,1,2,4,7,... \}$ is the classic Tribonacci sequence defined by $$T_{n}=T_{n-1}+T_{n-2}+T_{n-3},\ T_{0}=0,\ T_{1}=T_{2}=1, n\geq 3.$$
\end{rem}

Relations (\ref{i1}), (\ref{i2}) and (\ref{i3}) emerge naturally as a result of what we know about sequence $\{z_{n}^{(3)}(a,b,c;r,s,t)\}_{n\geq 0}$. It is readily seen, however, that they are merely instances of general ones. In fact, setting $a=1$ and $b=c=0$, Eq. (\ref{ec:5}) gives
\begin{equation}\label{n5}
\begin{aligned}
H_{n}^{(3)}&=\frac{P\alpha^{n}}{(\alpha-\omega_{1})(\alpha-\omega_{2})}-\frac{Q\omega_{1}^{n}}{(\alpha-\omega_{1})(\omega_{1}-\omega_{2})}+\frac{R\omega_{2}^{n}}{(\alpha-\omega_{2})(\omega_{1}-\omega_{2})}\\
&=\frac{t\alpha^{n-1}}{(\alpha-\omega_{1})(\alpha-\omega_{2})}-\frac{t\omega_{1}^{n-1}}{(\alpha-\omega_{1})(\omega_{1}-\omega_{2})}+\frac{t\omega_{2}^{n-1}}{(\alpha-\omega_{2})(\omega_{1}-\omega_{2})},
\end{aligned}
\end{equation}
where the coefficients are $P=\omega_{1}\omega_{2}$, $Q=\alpha\omega_{2}$ and $R=\alpha\omega_{1}$, and choosing further, for arbitrary $\lambda$, $r(\lambda)=\lambda$, $s(\lambda)=3\lambda^{2}$ and $t(\lambda)=9\lambda^{3}$ ($\alpha(\lambda)=3\lambda$, $\omega_{1}(\lambda)=-\lambda+i\lambda\sqrt{2}$ and $\omega_{2}(\lambda)=-\lambda-i\lambda\sqrt{2}$), then
\begin{align*}
H_{n}^{(3)}(1,0,0;\lambda,3\lambda^{2},9\lambda^{3})&=9\lambda^{n}\left\lbrace \begin{array}{c} \frac{\alpha^{n-1}}{(\alpha-\omega_{1})(\alpha-\omega_{2})}-\frac{\omega_{1}^{n-1}}{(\alpha-\omega_{1})(\omega_{1}-\omega_{2})}\\
+\frac{\omega_{2}^{n-1}}{(\alpha-\omega_{2})(\omega_{1}-\omega_{2})}\end{array}\right\rbrace\\
&=9\lambda^{n}T_{n-1}^{(3)}
\end{align*}
by Eq. (\ref{eq:8}) and we have a generalized form of Eq. (\ref{i1}), parameterized by $\lambda$, which recovers it when $\lambda=\frac{1}{3}$:

\begin{prop}\label{prop1}
For $n\geq 2$, we have
\begin{equation}\label{iden1}
T_{n-1}^{(3)}=\frac{1}{9\lambda^{n}}H_{n}^{(3)}(1,0,0;\lambda,3\lambda^{2},9\lambda^{3}).
\end{equation}
\end{prop}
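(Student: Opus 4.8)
The plan is to mimic the derivation already carried out in the excerpt immediately preceding the statement, but to present it as a self-contained inductive or Binet-based argument so that Proposition \ref{prop1} stands on its own. First I would observe that the defining recurrence for $\{T_n^{(3)}\}$ in Eq. (\ref{ecua:6}) is precisely Eq. (\ref{ec:5}) with $(r,s,t)=(1,3,9)$ and initial data $(0,0,1)$, while $H_n^{(3)}(1,0,0;\lambda,3\lambda^2,9\lambda^3)$ satisfies Eq. (\ref{ec:5}) with $(r,s,t)=(\lambda,3\lambda^2,9\lambda^3)$ and initial data $(1,0,0)$. The key algebraic fact is that the characteristic polynomial $x^3-\lambda x^2-3\lambda^2 x-9\lambda^3$ has roots $\alpha(\lambda)=3\lambda$, $\omega_1(\lambda)=\lambda(-1+i\sqrt2)$, $\omega_2(\lambda)=\lambda(-1-i\sqrt2)$, i.e. exactly $\lambda$ times the roots $3,\,-1\pm i\sqrt2$ of the polynomial for $\{T_n^{(3)}\}$; this is a one-line check of the elementary symmetric functions.

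Next I would plug into the Binet formula Eq. (\ref{eq:8}). With $a=1$, $b=c=0$ we have, as the excerpt records, $P=\omega_1\omega_2$, $Q=\alpha\omega_2$, $R=\alpha\omega_1$, so each of the three terms carries an extra factor of a root relative to the $\{T_n^{(3)}\}$ case, producing the shift $n\mapsto n-1$: explicitly $H_n^{(3)}(1,0,0;\lambda,3\lambda^2,9\lambda^3)$ equals $t(\lambda)$ times the sum $\sum \alpha(\lambda)^{n-1}/(\cdots)$. Now substitute $\alpha(\lambda)=3\lambda$, $\omega_i(\lambda)=\lambda\cdot\omega_i(1)$ and factor: the numerator scales like $\lambda^{n-1}$, each difference $\alpha-\omega_i$ or $\omega_1-\omega_2$ in the denominator scales like $\lambda$, so the sum scales like $\lambda^{n-1}/\lambda^2=\lambda^{n-3}$, and multiplying by $t(\lambda)=9\lambda^3$ gives an overall factor $9\lambda^{n}$ times the value of the same Binet expression at $\lambda=1$. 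But that value at $\lambda=1$ is, by Eq. (\ref{eq:8}) applied to $\{T_n^{(3)}\}=\{H_n^{(3)}(0,0,1;1,3,9)\}$, precisely $T_{n-1}^{(3)}$. Dividing through by $9\lambda^n$ yields Eq. (\ref{iden1}).

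As an alternative (and perhaps cleaner) route I would give a direct induction on $n$: check $n=2$ (where both sides equal, after using $H_2^{(3)}=0$ for initial data $(1,0,0)$... actually $T_1^{(3)}=0$ and $H_2^{(3)}(1,0,0;\cdot)=c=0$, consistent) and a further base case $n=3$ using $H_3^{(3)}=rH_2^{(3)}+sH_1^{(3)}+tH_0^{(3)}=t(\lambda)=9\lambda^3$ against $T_2^{(3)}=1$; then assume the identity for $n$, $n+1$, $n+2$ and apply both recurrences. The inductive step amounts to verifying
\[
\frac{1}{9\lambda^{n+3}}\bigl(\lambda H_{n+2}^{(3)}+3\lambda^2 H_{n+1}^{(3)}+9\lambda^3 H_n^{(3)}\bigr)
= \frac{1}{9\lambda^{n+2}}H_{n+2}^{(3)}\cdot\frac{1}{\lambda}+\cdots,
\]
which collapses to $T_{n+2}^{(3)}=T_{n+2}^{(3)}+3T_{n+1}^{(3)}+9T_n^{(3)}$ after the induction hypothesis is inserted and the powers of $\lambda$ cancel — i.e. it reduces to Eq. (\ref{ecua:6}).

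The main obstacle is essentially bookkeeping rather than conceptual: one must track the powers of $\lambda$ carefully through the three Binet terms (or through the three shifted indices in the induction) and confirm the index shift $n\mapsto n-1$ is handled consistently, together with pinning down the correct base cases so the induction covers all $n\geq 2$. There is no deep difficulty — the content is the scaling relation $\alpha(\lambda),\omega_i(\lambda)=\lambda\cdot(\text{roots at }\lambda=1)$ — but it is worth stating the base cases explicitly since the proposition is asserted for $n\geq 2$ and the identities it generalizes, Eqs. (\ref{i1})–(\ref{i3}), carry slightly different lower bounds.
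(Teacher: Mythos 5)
Your Binet-scaling argument is exactly the paper's own derivation of Proposition \ref{prop1} (the observation that the roots for $(\lambda,3\lambda^{2},9\lambda^{3})$ are $\lambda$ times those of $y^{3}-y^{2}-3y-9$, combined with $P=\omega_{1}\omega_{2}$, $Q=\alpha\omega_{2}$, $R=\alpha\omega_{1}$ giving the factor $t\alpha^{n-1}$, etc.), and your inductive alternative is the analogue of the paper's ``Proof 2'' of the companion identity (\ref{iden2}), so the approach is essentially identical and correct. The only nit is that a third-order induction needs three base cases, so you should also record $n=4$ (where $H_{4}^{(3)}(1,0,0;\lambda,3\lambda^{2},9\lambda^{3})=9\lambda^{4}$ against $T_{3}^{(3)}=1$) alongside $n=2$ and $n=3$ before the step from $\{n,n+1,n+2\}$ to $n+3$ can be applied.
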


In a similar fashion, Eq. (\ref{ec:5}) gives 
\begin{equation}\label{n6}
H_{n}^{(3)}(0,0,1;r,s,t)=\left\lbrace \begin{array}{c} \frac{\alpha^{n}}{(\alpha-\omega_{1})(\alpha-\omega_{2})}-\frac{\omega_{1}^{n}}{(\alpha-\omega_{1})(\omega_{1}-\omega_{2})}\\
+\frac{\omega_{2}^{n}}{(\alpha-\omega_{2})(\omega_{1}-\omega_{2})}\end{array}\right\rbrace
\end{equation}
(the general term of the fundamental sequence mentioned earlier), with 
\begin{align*}
H_{n}^{(3)}(0,0,1;\lambda,3\lambda^{2},9\lambda^{3})&=\lambda^{n-2}\left\lbrace \begin{array}{c} \frac{\alpha^{n}}{(\alpha-\omega_{1})(\alpha-\omega_{2})}-\frac{\omega_{1}^{n}}{(\alpha-\omega_{1})(\omega_{1}-\omega_{2})}\\
+\frac{\omega_{2}^{n}}{(\alpha-\omega_{2})(\omega_{1}-\omega_{2})}\end{array}\right\rbrace\\
&=\lambda^{n-2}T_{n}^{(3)}
\end{align*}
and so a general form of Eq. (\ref{i3}) which latter is also reproduced for the same value $\lambda=\frac{1}{3}$:

\begin{prop}\label{prop2}
For $n\geq 1$, we have
\begin{equation}\label{iden2}
T_{n}^{(3)}=\frac{1}{\lambda^{n-2}}H_{n}^{(3)}(0,0,1;\lambda,3\lambda^{2},9\lambda^{3}).
\end{equation}
\end{prop}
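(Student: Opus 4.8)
\textbf{Proof proposal for Proposition \ref{prop2}.}

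The plan is to follow the exact template that produced Proposition \ref{prop1}, but starting from the Binet representation of the fundamental sequence rather than the one with $a=1$, $b=c=0$. First I would record that for the choice $a=0$, $b=0$, $c=1$ the coefficients in Eq. (\ref{eq:8}) simplify: from $P=c-(\omega_{1}+\omega_{2})b+\omega_{1}\omega_{2}a$, $Q=c-(\alpha+\omega_{2})b+\alpha\omega_{2}a$ and $R=c-(\alpha+\omega_{1})b+\alpha\omega_{1}a$ one gets $P=Q=R=1$, so that $H_{n}^{(3)}(0,0,1;r,s,t)$ is exactly the bracketed expression in Eq. (\ref{n6}). This is the "fundamental sequence'' already identified in the text, and note it coincides with $h_{n}^{(3)}/t$ shifted appropriately; in any case no extra work is needed here since Eq. (\ref{n6}) is displayed for us.

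Next I would substitute the parameterization $r(\lambda)=\lambda$, $s(\lambda)=3\lambda^{2}$, $t(\lambda)=9\lambda^{3}$, under which the characteristic roots scale homogeneously: $\alpha(\lambda)=3\lambda$, $\omega_{1}(\lambda)=-\lambda+i\lambda\sqrt{2}$, $\omega_{2}(\lambda)=-\lambda-i\lambda\sqrt{2}$, i.e. each root is $\lambda$ times the corresponding root for $(r,s,t)=(1,3,9)$. The key observation is then the homogeneity of the bracketed Binet expression: each numerator $\alpha^{n}$, $\omega_{1}^{n}$, $\omega_{2}^{n}$ carries a factor $\lambda^{n}$, while each denominator $(\alpha-\omega_{i})(\alpha-\omega_{j})$ or $(\omega_{1}-\omega_{2})(\cdot)$ carries a factor $\lambda^{2}$, so the whole bracket scales by $\lambda^{n-2}$. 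Pulling this factor out and recognising the remaining bracket as $H_{n}^{(3)}(0,0,1;1,3,9)=T_{n}^{(3)}$ (by Eq. (\ref{ecua:6}) and the definition $\{T_{n}^{(3)}\}=\{H_{n}^{(3)}(0,0,1;1,3,9)\}$) gives $H_{n}^{(3)}(0,0,1;\lambda,3\lambda^{2},9\lambda^{3})=\lambda^{n-2}T_{n}^{(3)}$, which rearranges to Eq. (\ref{iden2}).

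There is essentially no obstacle: the argument is the scaling/homogeneity computation already carried out in the excerpt immediately before the statement of Proposition \ref{prop2}, so the proof is a two-line verification. The only points requiring a word of care are (i) checking that $\Delta(\lambda,3\lambda^{2},9\lambda^{3})>0$ so that the Binet formula (\ref{eq:8}) in its stated "one real, two nonreal'' form applies — a direct substitution shows $\Delta$ picks up a factor $\lambda^{6}$ times $\Delta(1,3,9)>0$, hence is positive for all real $\lambda\neq 0$ — and (ii) noting the range $n\geq 1$: for $n=1,2$ one can check the identity by hand against $T_{1}^{(3)}=0$, $T_{2}^{(3)}=1$ and $H_{1}^{(3)}(0,0,1;\cdot)=0$, $H_{2}^{(3)}(0,0,1;\cdot)=1$, while for $n\geq 3$ the Binet-based scaling argument applies verbatim. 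Alternatively — and this is worth remarking — one could bypass the roots entirely and argue by induction on $n$: both sides of Eq. (\ref{iden2}) satisfy the same three-term recurrence up to the $\lambda$-rescaling (since $H^{(3)}_{n+3}(0,0,1;\lambda,3\lambda^2,9\lambda^3)=\lambda H^{(3)}_{n+2}+3\lambda^2 H^{(3)}_{n+1}+9\lambda^3 H^{(3)}_{n}$ matches $T^{(3)}_{n+3}=T^{(3)}_{n+2}+3T^{(3)}_{n+1}+9T^{(3)}_{n}$ after dividing by $\lambda^{n+1}$), and they agree on the three base cases, so they coincide for all $n$.
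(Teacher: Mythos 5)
Your proposal is correct. Your primary argument --- the homogeneity of the Binet expression under the root scaling $\alpha\mapsto 3\lambda$, $\omega_{1,2}\mapsto\lambda(-1\pm i\sqrt{2})$, which pulls out exactly $\lambda^{n}/\lambda^{2}=\lambda^{n-2}$ --- is precisely the computation the paper itself displays in the text immediately before stating Proposition \ref{prop2}, so on that route you are reproducing the paper's own derivation rather than one of its labelled proofs. The paper then offers two formally independent proofs: Proof 1 conjugates the companion matrix $\textbf{C}(\lambda)$ by the diagonal matrix $\textbf{F}(\lambda)=\mathrm{diag}(1,\lambda,\lambda^{2})$ to reduce to the $(1,3,9)$ case via Eq. (\ref{n8}), and Proof 2 shows that $D_{n}^{(3)}=\lambda^{n-2}T_{n}^{(3)}$ satisfies the Horadam recurrence with parameters $(\lambda,3\lambda^{2},9\lambda^{3})$ and the initial values $0,0,1$. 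Your closing ``alternative'' induction argument is exactly the paper's Proof 2. Your added checks --- that $\Delta(\lambda,3\lambda^{2},9\lambda^{3})=24\lambda^{6}>0$ so the stated Binet form applies, and the hand verification of the small cases $n=1,2$ --- are correct and are small points the paper glosses over; the matrix and recurrence proofs have the modest advantage of avoiding the roots (and hence the sign of $\Delta$) altogether.
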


Here, we will just prove Eq. (\ref{iden2}) since (\ref{iden1}) can be dealt with in the same manner. We use two different ways to proof the result.

\begin{proof}[Proof 1 of Eq. (\ref{iden2})]
This proof uses a matrix approach. Writing the recurrence Eq. (\ref{ec:5}) as $$\left[\begin{array}{c} H_{n+1}^{(3)}\\H_{n}^{(3)} \\ H_{n-1}^{(3)}\end{array}\right]=\left[\begin{array}{ccc} r&s&t\\1&0&0 \\ 0&1&0\end{array}\right]\left[\begin{array}{c} H_{n}^{(3)}\\H_{n-1}^{(3)} \\ H_{n-2}^{(3)}\end{array}\right],\ n\geq 2$$ in matrix form leads iteratively to the matrix power equation
\begin{equation}\label{n7}
\left[\begin{array}{c} H_{n+1}^{(3)}\\H_{n}^{(3)} \\ H_{n-1}^{(3)}\end{array}\right]=\left[\begin{array}{ccc} r&s&t\\1&0&0 \\ 0&1&0\end{array}\right]^{n-1}\left[\begin{array}{c} H_{2}^{(3)}\\H_{1}^{(3)} \\ H_{0}^{(3)}\end{array}\right]
\end{equation}
which holds for $n\geq 1$. Thus,
\begin{equation}\label{n8}
H_{n}^{(3)}(a,b,c;r,s,t)=\left[\begin{array}{ccc} 0&1&0\end{array}\right]\left[\begin{array}{ccc} r&s&t\\1&0&0 \\ 0&1&0\end{array}\right]^{n-1}\left[\begin{array}{c} c\\b \\ a\end{array}\right].
\end{equation}

Let us define matrices $$\textbf{F}(\lambda)=\left[\begin{array}{ccc} 1&0&0\\0&\lambda&0\\ 0&0&\lambda^{2}\end{array}\right],\ \ \textbf{C}(\lambda)=\left[\begin{array}{ccc} \lambda&3\lambda^{2}&9\lambda^{3}\\1&0&0 \\ 0&1&0\end{array}\right].$$ Then, observing the decomposition 
\begin{equation}\label{n10}
\frac{1}{\lambda}\textbf{F}(\lambda)\textbf{C}(\lambda)\textbf{F}^{-1}(\lambda)=\left[\begin{array}{ccc} 1&3&9\\1&0&0\\ 0&1&0\end{array}\right],\ \lambda \neq 0,
\end{equation}
it follows, using Eq. (\ref{ecua:6}) as a starting point, that $T_{n}^{(3)}=H_{n}^{(3)}(0,0,1;1,3,9)$. Then, we obtain
\begin{equation}\label{ga}
\begin{aligned}
T_{n}^{(3)}&=\left[\begin{array}{ccc} 0&1&0\end{array}\right]\left[\begin{array}{ccc} 1&3&9\\1&0&0 \\ 0&1&0\end{array}\right]^{n-1}\left[\begin{array}{c} 1\\0 \\ 0\end{array}\right]\\
&=\left[\begin{array}{ccc} 0&1&0\end{array}\right]\left[\frac{1}{\lambda}\textbf{F}(\lambda)\textbf{C}(\lambda)\textbf{F}^{-1}(\lambda)\right]^{n-1}\left[\begin{array}{ccc} 1&0&0\end{array}\right]^{T}\\
&=\left[\begin{array}{ccc} 0&1&0\end{array}\right]\left[\lambda^{-(n-1)}\textbf{F}(\lambda)\textbf{C}^{n-1}(\lambda)\textbf{F}^{-1}(\lambda)\right]\left[\begin{array}{ccc} 1&0&0\end{array}\right]^{T},
\end{aligned}
\end{equation}
having employed Eqs. (\ref{n8}) and (\ref{n10}). 

Further, noting that $\left[\begin{array}{ccc} 0&1&0\end{array}\right]\textbf{F}(\lambda)= \left[\begin{array}{ccc} 0&\lambda&0\end{array}\right]$ and the relation $\textbf{F}^{-1}(\lambda)\left[\begin{array}{ccc} 1&0&0\end{array}\right]^{T}=\left[\begin{array}{ccc} 1&0&0\end{array}\right]^{T}$, the last identity can be written as 
\begin{equation}\label{n12}
\begin{aligned}
T_{n}^{(3)}&=\lambda^{-(n-2)}\left[\begin{array}{ccc} 0&1&0\end{array}\right]\textbf{C}^{n-1}(\lambda)\left[\begin{array}{ccc} 1&0&0\end{array}\right]^{T}\\
&=\frac{1}{\lambda^{n-2}}H_{n}^{(3)}(0,0,1;\lambda,3\lambda^{2},9\lambda^{3})
\end{aligned}
\end{equation}
using Eqs. (\ref{n8}) and (\ref{ga}). The proof is completed.
\end{proof}

\begin{proof}[Proof 2 of Eq. (\ref{iden2})]
This proof takes a different route. Here, we define a sequence $D_{n}^{(3)}=\lambda^{n-2}T_{n}^{(3)}$, where initial conditions are $D_{0}^{(3)}=\lambda^{-2}T_{0}^{(3)}=0$, $D_{1}^{(3)}=\lambda^{-1}T_{1}^{(3)}=0$ and $D_{2}^{(3)}=T_{2}^{(3)}=1$. Then, for all $n\geq 2$, we have
\begin{align*}
D_{n+1}^{(3)}&=\lambda^{n-1}T_{n+1}^{(3)}\\
&=\lambda^{n-1}\left(T_{n}^{(3)}+3T_{n-1}^{(3)}+9T_{n-2}^{(3)}\right)\ \ (\textrm{By Eq. (\ref{ecua:6})})\\
&=\lambda\left(\lambda^{n-2}T_{n}^{(3)}\right)+3\lambda^{2}\left(\lambda^{n-3}T_{n-1}^{(3)}\right)+9\lambda^{3}\left(\lambda^{n-4}T_{n-2}^{(3)}\right)\\
&=\lambda D_{n}^{(3)}+3\lambda^{2}D_{n-1}^{(3)}+9\lambda^{3}D_{n-2}^{(3)}.
\end{align*}
This being a third-order Horadam recurrence in Eq. (\ref{ec:5}) for the sequence $D_{n}^{(3)}$ (with $r=\lambda$, $s=3\lambda^{2}$ and $t=9\lambda^{3}$), we have, for $n\geq 0$, $$\lambda^{n-2}T_{n}^{(3)}=H_{n}^{(3)}(0,0,1;\lambda,3\lambda^{2},9\lambda^{3})$$ as required.
\end{proof}

\begin{rem}
Using $\lambda=1$ in Eq. (\ref{iden1}), we obtain $$T_{n-1}^{(3)}=\frac{1}{9}H_{n}^{(3)}(1,0,0;1,3,9).$$ 
\end{rem}

\section{Conclusions}
Geometric mean sequences, however defined, have been studied in the past, but not to any great extent. This paper considers the notion of the growth rate of what we regard as the standard geometric mean sequence, and from that develops new identities which express generalized Tribonacci numbers in terms of parameterized families of third-order Horadam numbers. As an extension of this article, future work will examine properties of the scaled version of recurrence in Eq. (\ref{e0}) or the case the higher order Horadam sequences following the idea of Larcombe and Bagdasar in \cite{Lar3}.



\begin{thebibliography}{00}

\bibitem{Bu}
M. W. Bunder, \textit{Products and powers}, The Fibonacci Quarterly 13(3) (1975), 279.
\bibitem{Cer}
G. Cerda-Morales, \textit{Quadratic Approximation of Generalized Tribonacci Numbers}, Discussiones Mathematicae--General Algebra and Applications 32(2) (2018), 227--237.
\bibitem{Cer1} 
G. Cerda-Morales, \emph{On a Generalization of Tribonacci Quaternions}, Mediterranean Journal of Mathematics 14:239 (2017), 1--12.
\bibitem{Sha} 
A. G. Shannon  and A. F. Horadam, \textit{Some Properties of Third-Order Recurrence Relations}, The Fibonacci Quarterly 10(2) (1972), 135--146.
\bibitem{Lar1} 
P. J. Larcombe, \textit{Horadam sequences: a survey update and extension}, Bull. I.C.A. 80 (2017), 99--118.
\bibitem{Lar2} 
P. J. Larcombe, O. D. Bagdasar and E. J. Fennessey, \textit{Horadam sequences: a survey}, Bull. I.C.A. 67 (2013), 49--72.
\bibitem{Lar3} 
P. J. Larcombe, O. D. Bagdasar, \textit{On a result of Bunder involving Horadam sequences: a proof and generalization}, The Fibonacci Quarterly 51 (2013), 174--176.
\bibitem{Shi} 
A. J. Shiu and C. R. Yerger, \textit{Geometric and harmonic variations of the Fibonacci sequence}, Mathematical Spectrum 41 (2009), 81--86.
\bibitem{Wa} 
M. E. Waddill and L. Sacks, \textit{Another Generalized Fibonacci Sequence}, The Fibonacci Quarterly 5(3) (1967), 209--222.

\end{thebibliography}
\end{document}